\definecolor{webgreen}{rgb}{0,.5,0}
\definecolor{webbrown}{rgb}{.6,0,0}
\begin{document}

\title[]{Determinants of grids, tori, cylinders\\ and M\"{o}bius ladders}

\author{Khodakhast\ Bibak$^1$}

\author{Roberto\ Tauraso$^2$}

\footnotetext[1]{Department of Combinatorics and Optimization,
University of Waterloo, Waterloo, Ontario, Canada N2L 3G1\\
Email: {\tt kbibak@uwaterloo.ca}}

\footnotetext[2]{Dipartimento di Matematica,
Universit\`a di Roma ``Tor Vergata'', 00133 Roma, Italy\\
Email: {\tt tauraso@mat.uniroma2.it}}

\maketitle
\begin{abstract}
Recently, Bie\~{n} [A. Bie\~{n}, The problem of singularity for
planar grids, {\it Discrete Math.} 311 (2011), 921--931] obtained
a recursive formula for the determinant of a grid. Also, recently,
Pragel [D. Pragel, Determinants of box products of paths, {\it
Discrete Math.} 312 (2012), 1844--1847], independently, obtained
an explicit formula for this determinant. In this paper, we give a
short proof for this problem. Furthermore, applying the same
technique, we get explicit formulas for the determinant of a
torus, a cylinder, and a M\"{o}bius ladder.
\end{abstract}

\newtheorem{theorem}{Theorem}
\newtheorem{corollary}[theorem]{Corollary}
\newtheorem{lemma}[theorem]{Lemma}
\newtheorem{proposition}[theorem]{Proposition}
\newtheorem{conjecture}[theorem]{Conjecture}
\newtheorem{defin}[theorem]{Definition}
\newenvironment{definition}{\begin{defin}\normalfont\quad}{\end{defin}}
\newtheorem{examp}[theorem]{Example}
\newenvironment{example}{\begin{examp}\normalfont\quad}{\end{examp}}
\newtheorem{rema}[theorem]{Remark}
\newenvironment{remark}{\begin{rema}\normalfont\quad}{\end{rema}}

\newcommand{\len}{\mbox{len}}
\newcommand{\bal}[1]{\begin{align*}#1\end{align*}}
\newcommand{\f}[2]{\displaystyle \frac{#1}{#2}}
\newcommand{\bt}{\begin{thm}}
\newcommand{\et}{\end{thm}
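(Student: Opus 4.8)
The plan is to pass from the grid to its spectrum, collapse the resulting double product with a Chebyshev identity, and finish with an elementary counting argument. First I would list the $mn$ vertices of $P_m\square P_n$ column by column, so that its adjacency matrix becomes block tridiagonal with $n$ diagonal blocks equal to $A(P_m)$ and all off-diagonal blocks equal to $I_m$; equivalently $A(P_m\square P_n)=A(P_m)\otimes I_n+I_m\otimes A(P_n)$. The two Kronecker summands are symmetric and commute, hence are simultaneously diagonalizable, and inserting the classical path eigenvalues $2\cos\frac{i\pi}{m+1}$ and $2\cos\frac{j\pi}{n+1}$ yields
\[
\det A(P_m\square P_n)=\prod_{i=1}^{m}\prod_{j=1}^{n}\Big(2\cos\tfrac{i\pi}{m+1}+2\cos\tfrac{j\pi}{n+1}\Big).
\]

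Next I would collapse the inner product. Fix $i$ and set $x=2\cos\frac{i\pi}{m+1}$. Since the multiset $\{2\cos\frac{j\pi}{n+1}:1\le j\le n\}$ is invariant under negation, $\prod_{j=1}^{n}\big(x+2\cos\frac{j\pi}{n+1}\big)$ is precisely the characteristic polynomial of $A(P_n)$ at $x$, and the standard fact that this polynomial equals $\sin((n+1)\theta)/\sin\theta$ at $x=2\cos\theta$ gives
\[
\det A(P_m\square P_n)=\prod_{i=1}^{m}\frac{\sin\!\big((n+1)\tfrac{i\pi}{m+1}\big)}{\sin\tfrac{i\pi}{m+1}}.
\]
Writing $p=n+1$, $q=m+1$, $d=\gcd(p,q)$: if $d>1$, the index $i=q/d$ forces the numerator $\sin(p\pi/d)$ to vanish while its denominator $\sin(\pi/d)$ does not, so the determinant is $0$. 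If $d=1$, then $i\mapsto pi\bmod q$ permutes $\{1,\dots,q-1\}$, so writing $pi=q\lfloor pi/q\rfloor+r_i$ and using $\sin(\alpha+k\pi)=(-1)^k\sin\alpha$ the product telescopes to $(-1)^{\sum_{i=1}^{q-1}\lfloor pi/q\rfloor}$; the classical lattice-point identity $\sum_{i=1}^{q-1}\lfloor pi/q\rfloor=\frac{(p-1)(q-1)}{2}$ then gives $(-1)^{mn/2}$, and $mn$ is even whenever $\gcd(m+1,n+1)=1$.

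For the torus, cylinder and M\"obius ladder I would rerun the first step with the appropriate factor replaced by a cycle $C_\ell$ (whose eigenvalues are $2\cos\frac{2\pi i}{\ell}$), or with the sign twist that encodes the M\"obius identification, and then repeat the trigonometric evaluation. The step I expect to be the main obstacle is not any single identity but the bookkeeping around it: recognizing the inner product as a Chebyshev value with the correct normalization, and then tracking the overall sign through the floor sum and the case analysis on $\gcd(m+1,n+1)$; the spectral reduction itself is routine.
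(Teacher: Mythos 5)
Your treatment of the grid case is correct, and it reaches the paper's intermediate expression $\prod_{i=1}^{q-1}\sin(pi\pi/q)/\sin(i\pi/q)$ (with $p=n+1$, $q=m+1$) by a genuinely different route. The paper turns $2\cos\alpha+2\cos\beta$ into $\sin$ of a sum by the substitution $j\mapsto n-j$ and then collapses both the inner product and the final ratio with a single trigonometric lemma, namely $\sin(qx)=2^{q-1}(-1)^{(p-1)(q-1)/2}\prod_{j=0}^{q-1}\sin\left(x+pj\pi/q\right)$ and its corollary $\prod_{j=1}^{q-1}\sin\left(pj\pi/q\right)=(-1)^{(p-1)(q-1)/2}q/2^{q-1}$. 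You instead collapse the inner product by recognizing it as the characteristic polynomial of the path evaluated at $2\cos\theta$ (legitimate because the path spectrum is negation-invariant), i.e.\ $\sin((n+1)\theta)/\sin\theta$, and you extract the sign via the permutation $i\mapsto pi\bmod q$ together with the Gauss-lemma count $\sum_{i=1}^{q-1}\lfloor pi/q\rfloor=(p-1)(q-1)/2$. That is a correct, self-contained alternative for the grid formula: it replaces the analytic product identity by a Chebyshev evaluation plus an elementary lattice-point argument, and the vanishing case ($i=q/d$) is handled cleanly.

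However, the theorem also asserts the torus and cylinder formulas (and the companion result for the M\"obius ladder), and there your plan to ``rerun the first step and repeat the trigonometric evaluation'' hides the actual work; as written this is a genuine gap. Your collapsing device relies on negation-invariance of the spectrum, which fails for odd cycles ($C_n$ is not bipartite), so $\prod_j\bigl(x+2\cos(2\pi j/n)\bigr)$ is not the characteristic polynomial of $C_n$ at $x$ but $(-1)^n$ times its value at $-x$; after fixing this one is left with products such as $\prod_{i=1}^{m-1}\sin^2\bigl(ni\pi/(2m)\bigr)$ (cylinder) and $\prod_{i=0}^{m-1}\cos\bigl(ni\pi/m\bigr)$ (torus), and evaluating these is exactly where the difficulty sits. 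In the cylinder case the denominator is $2m$ while $i$ ranges only over $1,\dots,m-1$, so $ni\bmod 2m$ is not a permutation of that range and your floor-sum telescoping does not apply directly; the paper needs a pairing $i\leftrightarrow 2m-i$ (using $\gcd(2m,n)=1$ when $n$ is odd) together with the sine-product identity, and the case split on the parity of $n$ and on $\gcd(m,n/2)$ comes out of that analysis. The torus case needs the separate cosine-product evaluation, with $d=\gcd(m,n)$ entering through a splitting of the index range, which is how $4^{\gcd(m,n)}$ arises. Finally, for the M\"obius ladder the alternating term $(-1)^j$ must be absorbed into the cosine machinery (the paper writes $(-1)^j=2\cos\bigl((3j+1)\pi/3\bigr)$), and your ``sign twist'' remark supplies no substitute for this step. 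So the proposal, as it stands, proves only formula \eqref{T1}; the remaining formulas require additional, case-specific arguments that are not sketched in enough detail to be checked.
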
}
\newcommand{\bp}{\begin{proof}}
\newcommand{\ep}{\end{proof}}
\newcommand{\bprop}{\begin{prop}}
\newcommand{\eprop}{\end{prop}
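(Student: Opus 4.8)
The plan is to compute the determinant spectrally rather than by a recursion. For the grid, write the adjacency matrix as the Kronecker sum $A(P_m)\otimes I_n+I_m\otimes A(P_n)$; since the path $P_\ell$ has adjacency spectrum $\{\,2\cos\frac{i\pi}{\ell+1}:1\le i\le\ell\,\}$, the eigenvalues of the grid are exactly the pairwise sums, so
\[
\det\bigl(A(P_m\,\square\,P_n)\bigr)=\prod_{j=1}^{m}\prod_{k=1}^{n}\Bigl(2\cos\tfrac{j\pi}{m+1}+2\cos\tfrac{k\pi}{n+1}\Bigr).
\]
First I would collapse the inner product. Fixing $k$ and setting $x=2\cos\frac{k\pi}{n+1}$, the product over $j$ equals $\det\bigl(xI_m+A(P_m)\bigr)$, which (using bipartiteness of $P_m$, so its spectrum is symmetric about $0$) equals $\det\bigl(xI_m-A(P_m)\bigr)=U_m(x/2)$, the Chebyshev polynomial of the second kind. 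Evaluating $U_m$ at $x/2=\cos\frac{k\pi}{n+1}$ gives $\frac{\sin\frac{(m+1)k\pi}{n+1}}{\sin\frac{k\pi}{n+1}}$, hence
\[
\det\bigl(A(P_m\,\square\,P_n)\bigr)=\prod_{k=1}^{n}\frac{\sin\frac{(m+1)k\pi}{n+1}}{\sin\frac{k\pi}{n+1}}.
\]

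It then remains to evaluate this product in closed form, which is where the real work lies. Put $N=n+1$ and $a=m+1$. If $d=\gcd(a,N)>1$, the index $k_0=N/d$ lies in $\{1,\dots,n\}$ and makes $\frac{ak_0}{N}=\frac{a}{d}$ an integer, so the corresponding factor vanishes and the determinant is $0$. If $\gcd(a,N)=1$, then $k\mapsto ak\bmod N$ is a permutation of $\{1,\dots,N-1\}$; writing $ak=q_kN+r_k$ with $0<r_k<N$, we have $\sin\frac{ak\pi}{N}=(-1)^{q_k}\sin\frac{r_k\pi}{N}$, and since the $r_k$ run over $\{1,\dots,N-1\}$ the sines $\sin\frac{r_k\pi}{N}$ cancel the denominators up to order. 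Thus the product collapses to $(-1)^{\sum_{k=1}^{N-1}q_k}$ with $q_k=\lfloor ak/N\rfloor$, and the classical evaluation $\sum_{k=1}^{N-1}\lfloor ak/N\rfloor=\frac{(a-1)(N-1)}{2}$ (valid when $\gcd(a,N)=1$, proved by the pairing $k\leftrightarrow N-k$) yields $(-1)^{mn/2}$; note $mn$ is even, since coprimality of $m+1,n+1$ forces one of $m,n$ to be even. This gives the explicit formula.

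Finally, the same three-step scheme --- spectral decomposition via Kronecker sums, a Chebyshev collapse of one of the two products, and a Gauss-lemma-style sign count --- carries over to the torus $C_m\,\square\,C_n$, the cylinder $C_m\,\square\,P_n$, and the M\"obius ladder, replacing the path spectrum $2\cos\frac{i\pi}{\ell+1}$ by the cycle spectrum $2\cos\frac{2\pi i}{\ell}$ and, for the M\"obius ladder, inserting the appropriate sign twist into the circulant block. I expect the main obstacle throughout to be that last step: carefully reducing the residue product and, in the non-coprime case, guaranteeing that a vanishing factor occurs with its index in the admissible range, together with tracking how the doubled cycle-indices alter both the coprimality threshold and the parity of $\sum\lfloor ak/N\rfloor$. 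The first two steps are essentially forced; it is this elementary but delicate number-theoretic bookkeeping that needs care.
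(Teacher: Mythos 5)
Your grid argument is correct, and it is close in spirit to the paper's: both start from the Cartesian-product (Kronecker-sum) spectrum, and your Chebyshev collapse $\det\bigl(xI_m+A(P_m)\bigr)=U_m(x/2)$ evaluated at $x=2\cos\frac{k\pi}{n+1}$ lands on exactly the single product $\prod_{k}\sin\frac{(m+1)k\pi}{n+1}\big/\sin\frac{k\pi}{n+1}$ that the paper reaches via the product-to-sum identity $2\cos a\cos b=\cos(a+b)+\cos(a-b)$ and its trigonometric lemma. Where the paper then quotes the closed form $\prod_{j=1}^{N-1}\sin\bigl(\frac{aj\pi}{N}\bigr)=(-1)^{(a-1)(N-1)/2}N/2^{N-1}$ (proved from the factorization of $z^N-1$), you finish with the residue permutation $k\mapsto ak\bmod N$ and the floor-sum identity $\sum_{k=1}^{N-1}\lfloor ak/N\rfloor=\frac{(a-1)(N-1)}{2}$; that is a legitimate, mildly different evaluation of the same quantity, and the grid formula (including the vanishing case and the parity remark) comes out correctly.

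The genuine gap is everything after ``carries over.'' For the torus $C_m\,\Box\,C_n$ with $m,n$ odd the answer is $4^{\gcd(m,n)}$, which is \emph{nonzero} precisely also when $\gcd(m,n)>1$; so your stated plan of ``guaranteeing that a vanishing factor occurs'' in the non-coprime case is the wrong expectation there. The zeros in that formula come from an even side (a zero eigenvalue $2+(-2)$), while in the odd, non-coprime case you must actually evaluate $\prod_{i=1}^{m}\cos\bigl(\frac{ni\pi}{m}\bigr)$ with $\gcd(n,m)=d>1$ by splitting it into $d$ copies of the coprime product, as the paper does; a sign count alone does not produce $4^{\gcd(m,n)}$. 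The cylinder likewise needs the parity-split collapse $\det\bigl(xI+A(C_n)\bigr)=2\bigl(\cos n\theta-(-1)^n\bigr)$ and then two distinct coprimality conditions ($\gcd(m,n)=1$ versus $\gcd(m,n/2)=1$) with different outcomes $m$ and $(-1)^{m-1}m^2$, none of which is set up in your outline. Finally, the M\"obius ladder is not a Cartesian product at all, so the Kronecker-sum step does not apply; one needs its circulant spectrum $(-1)^j+2\cos\frac{j\pi}{n}$ and a further device (the paper folds the sign into a cosine via $(-1)^j=2\cos\frac{(3j+1)\pi}{3}$) before any product formula can be used. As written, three of the four determinant formulas remain unproven.
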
}
\newcommand{\bl}{\begin{lemma}}
\newcommand{\el}{\end{lemma}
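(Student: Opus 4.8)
The plan is to extract the determinant from the spectrum of the adjacency matrix, exploiting the Cartesian‑product structure of the grid. Writing $G_{m,n}=P_m\,\square\,P_n$ for the $m\times n$ grid, its adjacency matrix is $A(P_m)\otimes I_n+I_m\otimes A(P_n)$, and the eigenvalues of $A(P_k)$ are the numbers $2\cos\frac{\ell\pi}{k+1}$, $\ell=1,\dots,k$ (with eigenvector $\big(\sin\frac{\ell\pi}{k+1},\sin\frac{2\ell\pi}{k+1},\dots,\sin\frac{k\ell\pi}{k+1}\big)$, a one‑line check from $2\cos\theta\,\sin r\theta=\sin(r-1)\theta+\sin(r+1)\theta$). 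Hence the $mn$ eigenvalues of $A(G_{m,n})$ are $2\cos\frac{i\pi}{m+1}+2\cos\frac{j\pi}{n+1}$ for $1\le i\le m$, $1\le j\le n$, so
\[
\det A(G_{m,n})=\prod_{i=1}^{m}\prod_{j=1}^{n}\Big(2\cos\tfrac{i\pi}{m+1}+2\cos\tfrac{j\pi}{n+1}\Big).
\]

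Next I would collapse the inner product via Chebyshev polynomials. Let $p_n$ be the monic polynomial with the $n$ roots $2\cos\frac{j\pi}{n+1}$; it satisfies $p_0=1$, $p_1(x)=x$, $p_n(x)=x\,p_{n-1}(x)-p_{n-2}(x)$, equivalently $p_n(2\cos\theta)=\sin\!\big((n+1)\theta\big)/\sin\theta$. Since the multiset $\{-2\cos\frac{j\pi}{n+1}:1\le j\le n\}$ coincides with $\{2\cos\frac{j\pi}{n+1}:1\le j\le n\}$ (replace $j$ by $n+1-j$), the product over $j$ equals $p_n\big(2\cos\frac{i\pi}{m+1}\big)=\sin\frac{(n+1)i\pi}{m+1}\big/\sin\frac{i\pi}{m+1}$, and therefore
\[
\det A(G_{m,n})=\prod_{i=1}^{m}\frac{\sin\frac{(n+1)i\pi}{m+1}}{\sin\frac{i\pi}{m+1}},
\]
where no denominator vanishes because $0<\tfrac{i}{m+1}<1$ for $1\le i\le m$.

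The argument then splits into two cases. If $d:=\gcd(m+1,n+1)>1$, then $i_0:=(m+1)/d\in\{1,\dots,m\}$ makes the numerator $\sin\!\big(\tfrac{n+1}{d}\pi\big)=0$, so $\det A(G_{m,n})=0$. If $d=1$, then $i\mapsto(n+1)i\bmod(m+1)$ permutes $\{1,\dots,m\}$; writing $\frac{(n+1)i}{m+1}=q_i+\frac{r_i}{m+1}$ with $q_i=\lfloor\tfrac{(n+1)i}{m+1}\rfloor$ and $r_i\in\{1,\dots,m\}$ gives $\sin\frac{(n+1)i\pi}{m+1}=(-1)^{q_i}\sin\frac{r_i\pi}{m+1}$, whence $\det A(G_{m,n})=(-1)^{S}$ with $S=\sum_{i=1}^{m}q_i$ (the factors $\sin\frac{r_i\pi}{m+1}$ cancel against the denominators after reindexing). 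Invoking the classical lattice‑point evaluation $\sum_{i=1}^{m}\lfloor\tfrac{(n+1)i}{m+1}\rfloor=\tfrac{mn}{2}$, valid precisely when $\gcd(m+1,n+1)=1$ (which also forces $mn$ even), yields $\det A(G_{m,n})=(-1)^{mn/2}$.

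The main obstacle is the sign in the coprime case: one must control, for each $i$, the parity of $\lfloor(n+1)i/(m+1)\rfloor$ and then sum these, which is exactly where the lattice‑point (quadratic‑reciprocity‑flavoured) identity enters; the product decomposition, the Chebyshev recursion, and the sine cancellation are all routine. The same spectral bookkeeping, with each $P_k$ replaced by $C_k$ and, for the M\"{o}bius ladder, an extra sign twist in one factor, is what I would then use for the torus, cylinder and M\"{o}bius‑ladder determinants.
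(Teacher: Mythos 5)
Your argument is correct, and it reaches the paper's key intermediate expression $\det A(P_m\,\Box\,P_n)=\prod_{i=1}^{m}\sin\bigl(\tfrac{(n+1)i\pi}{m+1}\bigr)/\sin\bigl(\tfrac{i\pi}{m+1}\bigr)$ by a genuinely different route. The paper turns each eigenvalue sum $2\cos\alpha+2\cos\beta$ into a product of a cosine and a sine via product-to-sum and the pairing $j\leftrightarrow n-j$, and then collapses the inner product with a shifted-sine product identity, $\sin(nx)=2^{n-1}(-1)^{(a-1)(n-1)/2}\prod_{j=0}^{n-1}\sin\bigl(x+\tfrac{aj\pi}{n}\bigr)$ for $\gcd(a,n)=1$, proved by roots of unity; the companion closed-form evaluation of $\prod_{j=1}^{n-1}\sin\bigl(\tfrac{aj\pi}{n}\bigr)$ then delivers both the vanishing condition and the sign in one stroke. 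You instead collapse the inner product by recognizing it as the characteristic polynomial of the path (the Chebyshev-type recursion $p_n(2\cos\theta)=\sin((n+1)\theta)/\sin\theta$, using that the path spectrum is symmetric), and you extract the sign by a residue-permutation argument combined with the classical lattice-point sum $\sum_{i=1}^{m}\lfloor\tfrac{(n+1)i}{m+1}\rfloor=\tfrac{mn}{2}$ when $\gcd(m+1,n+1)=1$; all of these steps check out, including the parity observation that coprimality forces $mn$ even. What each approach buys: yours is more elementary and self-contained for the grid, avoiding the complex-exponential lemma entirely, whereas the paper's lemma with an arbitrary coprime shift $a$ is exactly the tool that makes the torus, cylinder and M\"obius-ladder determinants follow by the same two-line computation; your closing remark about handling those cases by "the same spectral bookkeeping" is plausible but is only a sketch, and carrying it out would essentially require either reproving something like the paper's shifted-sine identity or redoing the floor-sum sign analysis with the relevant coprime multipliers (e.g.\ $n$ odd, $n/2$, or $n\pm1$) in each case.
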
}
\newcommand{\bc}{\begin{corollary}}
\newcommand{\ec}{\end{corollary}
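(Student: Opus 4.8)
The plan is to diagonalize in one of the two path directions and thereby turn the determinant into a product of Chebyshev values that can be evaluated by elementary trigonometry. Write $A=A(P_m\Box P_n)$ for the adjacency matrix of the grid. Since $A=A(P_m)\otimes I_n+I_m\otimes A(P_n)$ is a Kronecker sum, and $A(P_n)$ has eigenvalues $\lambda_k=2\cos\frac{k\pi}{n+1}$, $k=1,\dots,n$, with the explicit orthogonal sine-eigenvectors, we obtain
\[
\det A=\prod_{k=1}^{n}\det\!\bigl(A(P_m)+\lambda_k I_m\bigr).
\]
Now the characteristic polynomial of $P_m$ is the Chebyshev polynomial of the second kind, $\det(xI_m-A(P_m))=U_m(x/2)$; together with the parity $U_m(-t)=(-1)^mU_m(t)$ this gives $\det(A(P_m)+\lambda_kI_m)=(-1)^mU_m(-\lambda_k/2)=U_m\!\bigl(\cos\tfrac{k\pi}{n+1}\bigr)=\dfrac{\sin\frac{(m+1)k\pi}{n+1}}{\sin\frac{k\pi}{n+1}}$, hence
\[
\det A=\prod_{k=1}^{n}\frac{\sin\frac{(m+1)k\pi}{n+1}}{\sin\frac{k\pi}{n+1}}.
\]

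Next I would evaluate this product. The denominator is the classical value $\prod_{k=1}^{n}\sin\frac{k\pi}{n+1}=\frac{n+1}{2^{n}}$. Put $d=\gcd(m+1,n+1)$. If $d>1$ then $k_0=(n+1)/d\in\{1,\dots,n\}$ makes $\sin\frac{(m+1)k_0\pi}{n+1}=\sin\frac{(m+1)\pi}{d}=0$, so $\det A=0$. If $d=1$ then $k\mapsto r_k:=\bigl((m+1)k\bmod(n+1)\bigr)$ is a permutation of $\{1,\dots,n\}$; writing $(m+1)k=(n+1)q_k+r_k$ with $q_k=\lfloor(m+1)k/(n+1)\rfloor$ and using $\sin\frac{(m+1)k\pi}{n+1}=(-1)^{q_k}\sin\frac{r_k\pi}{n+1}$, the numerator becomes $(-1)^{\sum_kq_k}\prod_k\sin\frac{r_k\pi}{n+1}=(-1)^{\sum_kq_k}\cdot\frac{n+1}{2^{n}}$, so $\det A=(-1)^{\sum_{k=1}^{n}\lfloor(m+1)k/(n+1)\rfloor}$.

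Finally, since $\gcd(m+1,n+1)=1$ forces $m,n$ not both odd, $mn$ is even, and the classical floor-sum identity $\sum_{k=1}^{n}\lfloor(m+1)k/(n+1)\rfloor=\frac{mn}{2}$ (Gauss's lemma, or a lattice-point count under the line through the origin) yields $\det A=(-1)^{mn/2}$. The one genuinely delicate point is the sign bookkeeping in the sine product together with invoking (or quickly re-deriving) the floor-sum identity; the reduction to $\prod_k U_m\bigl(\cos\frac{k\pi}{n+1}\bigr)$ is a routine consequence of the Kronecker-sum decomposition. The remaining theorems, for the cylinder $C_m\Box P_n$, the torus $C_m\Box C_n$, and the M\"obius ladder, are then proved by the same template: replace the path spectrum by the cycle spectrum $2\cos\frac{2k\pi}{m}$ (and, for the M\"obius ladder, use the eigenvectors of the relevant sign-twisted circulant block), and evaluate the resulting trigonometric products exactly as above.
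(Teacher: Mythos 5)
Your argument for the grid case is correct, and it reaches the paper's formula \eqref{T1} by a genuinely different route in two places. The paper works with the full double product of eigenvalue sums $2\cos(i\pi/m)+2\cos(j\pi/n)$, turns each factor into sines via the product-to-sum identity, and collapses one index with the factorization \eqref{eq1} of $\sin(nx)$, proved through roots of unity; you instead block-diagonalize the Kronecker sum in one path direction and identify $\det\bigl(A(P_m)+2\cos\theta\, I\bigr)=U_m(\cos\theta)=\sin((m+1)\theta)/\sin\theta$, which lands directly on the same quotient of sine products $\prod_k\sin\frac{(m+1)k\pi}{n+1}\big/\prod_k\sin\frac{k\pi}{n+1}$ with no double-product manipulation. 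For the final evaluation the paper quotes its identity \eqref{eq2}, whose sign is built into the roots-of-unity computation; you re-derive exactly that identity (with $a=m+1$ and modulus $n+1$) by the residue-permutation argument together with Gauss's floor-sum lemma $\sum_{k=1}^{n}\lfloor(m+1)k/(n+1)\rfloor=mn/2$, and your observation that $\gcd(m+1,n+1)=1$ forces $mn$ even correctly settles the integrality of the exponent. Both evaluations are sound; yours is more elementary and number-theoretic, while the paper's Lemma~\ref{lem:triiden} is more uniform, since the same two identities are reused verbatim for the torus, cylinder and M\"obius ladder.

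Where your proposal falls short of the full theorems is the closing claim that \eqref{T2}, \eqref{T3} and \eqref{T4} follow ``exactly as above.'' For the cylinder $P_{m-1}\,\Box\, C_n$ your template gives $\prod_{j}U_{m-1}\bigl(\cos(2j\pi/n)\bigr)$, but the factors with $2j\pi/n\in\{0,\pi\}$ cannot be written as $\sin(m\theta)/\sin\theta$ (the denominator vanishes) and must be handled separately as $U_{m-1}(1)=m$ and $U_{m-1}(-1)=(-1)^{m-1}m$; it is precisely these boundary factors that produce the $m$ versus $(-1)^{m-1}m^2$ dichotomy and the parity conditions in \eqref{T3}, none of which your sketch addresses. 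For the torus the reduction goes through the characteristic polynomial of a cycle (first-kind Chebyshev, $2T_m(x/2)-2$) rather than $U_m$, the even cases must be disposed of by exhibiting a zero eigenvalue, and in the odd case the answer $4^{\gcd(m,n)}$ comes from a cosine-product evaluation like \eqref{eq3}, not from \eqref{eq2}. Finally, $M_{2n}$ is not a Cartesian product at all: beyond knowing the twisted-circulant eigenvalues $(-1)^j+2\cos(j\pi/n)$, one needs a device such as the paper's rewriting $(-1)^j=2\cos\bigl((3j+1)\pi/3\bigr)$ to bring the product into a form where the sine-product identity applies. None of this is out of reach of your method, but as stated it is an unproven assertion rather than a proof of the remaining formulas.
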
}
\newcommand{\Z}{\mathbb{Z}}
\newcommand{\be}{\begin{enumerate}}
\newcommand{\ee}{\end{enumerate}}

\section{Introduction and results}

We denote by $A(G)$ the adjacency matrix of a graph $G$. A path
(cycle) on $n$ vertices is denoted by $P_n$ (resp., $C_n$). Given
two graphs $G_1=(V_1,E_1)$ and $G_2=(V_2,E_2)$, their {\it
Cartesian product} $G_1\; \Box \;G_2$ is the graph with vertex set
$V_1\times V_2$ and edge set
$$\Big\{\big((u,v),(u',v)\big):(u,u')\in E_1, v\in V_2\Big\}\bigcup \Big\{\big((u,v),(u,v')\big): u \in V_1, (v,v')\in E_2\Big\}.$$
Cartesian product produces many important classes of graphs. For
example, a {\it grid} (also called {\it mesh}) is the Cartesian
product of two paths, a {\it torus} (also called {\it toroidal
grid} or {\it toroidal mesh}) is the Cartesian product of two
cycles, and a {\it cylinder} is the Cartesian product of a path
and a cycle. One can generalize these definitions to more than two
paths or cycles. These classes of graphs are widely used computer
architectures (e.g., grids are widely used in multiprocessor VLSI
systems) \cite{HL}.

The {\it nullity} of a graph $G$ of order $n$, denoted by
$\eta(G)$, is the multiplicity of 0 in the spectrum of $G$.
Clearly, $\eta(G) = n - r(A(G))$, where $r(A(G))$ is the rank of
$A(G)$. The nullity of a graph is closely related to the minimum
rank problem of a family of matrices associated with a graph (see,
e.g., \cite{FAHO} and the references therein). Nullity of a
(molecular) graph (specifically, determining whether it is
positive or zero) has also important applications in quantum
chemistry and H\"{u}ckel molecular orbital (HMO) theory (see,
e.g., \cite{GUBO} and the references therein). A famous problem,
posed by Collatz and Sinogowitz in 1957 \cite{COSI}, asks to
characterize all graphs with positive nullity. Clearly, $\det
A(G)=0$ if and only if $\eta(G)>0.$ So, examining the determinant
of a graph is a way to attack this problem. But there seems to be
little published on calculating the determinant of various classes
of graphs.

Recently, Bie\~{n} \cite{BIE} obtained a recursive formula for the
determinant of a grid. Also, recently, Pragel \cite{PRA},
independently, obtained an explicit formula for this determinant
(see \eqref{T1} below). Here, using trigonometric identities, we
give a short proof for this problem. Furthermore, applying the
same technique, we get explicit formulas for the determinant of a
torus, and a cylinder.

\begin{theorem} \label{thm:detcar}
Let $m>1$ and $n>1$ be integers. Then
\begin{align}\label{T1}
 \det A(P_{m-1}\; \Box \; P_{n-1})&=
  \begin{cases}
   (-1)^{\frac{(m-1)(n-1)}{2}} & \text{if}\; \gcd(m,n) = 1;\\
    0 & \text{otherwise}.
  \end{cases}\\\label{T2}
  \det A(C_{m}\; \Box \; C_{n})&=
  \begin{cases}
   4^{\gcd(m,n)}\qquad & \text{if $m$ and $n$ are odd};\\
    0\qquad & \text{otherwise}.
  \end{cases}\\\label{T3}
  \det A(P_{m-1}\; \Box \; C_{n})&=
  \begin{cases}
   m & \text{if $n$ is odd and $\gcd(m,n)=1$};\\
   (-1)^{m-1}m^2 & \text{if $n$ is even and $\gcd(m,n/2)=1$};\\
   0 & \text{otherwise}.
  \end{cases}
\end{align}
\end{theorem}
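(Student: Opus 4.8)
The plan is to diagonalize the adjacency matrices using the well-known eigenvalues of paths and cycles, then evaluate the resulting products of eigenvalues via trigonometric identities.

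\textbf{Setup via eigenvalues.} Recall that $A(C_n)$ has eigenvalues $2\cos(2\pi j/n)$ for $j=0,1,\dots,n-1$, while $A(P_{k})$ has eigenvalues $2\cos(\pi j/(k+1))$ for $j=1,\dots,k$. For a Cartesian product $G_1\;\Box\;G_2$, the eigenvalues are all sums $\lambda_i+\mu_j$ where $\lambda_i$ ranges over the spectrum of $G_1$ and $\mu_j$ over that of $G_2$. Hence each determinant in \eqref{T1}--\eqref{T3} is a product over a rectangular array of indices of terms of the form $2\cos\alpha+2\cos\beta$. Using $2\cos\alpha+2\cos\beta = 4\cos\!\big(\tfrac{\alpha+\beta}{2}\big)\cos\!\big(\tfrac{\alpha-\beta}{2}\big)$, I would rewrite each factor as a product of two cosines, so that the whole determinant becomes (up to a power of $4$ and a sign) a double product of cosine values at rational multiples of $\pi$.

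\textbf{Evaluating the products.} The key computational tool is the classical identity $\prod_{j=1}^{N-1}2\sin(\pi j/N)=N$ (equivalently $\prod_{j=1}^{N-1}2|\sin(\pi j/N)|=N$), together with its cosine analogues obtained by reindexing: for instance $\prod_{j=0}^{N-1}2\cos(\pi(2j+1)/(2N))$ and similar products evaluate to small integers or vanish according to the parity of $N$. For \eqref{T1} I would first detect when a factor is zero: $2\cos\alpha+2\cos\beta=0$ exactly when $\alpha+\beta$ or $\alpha-\beta$ is an odd multiple of $\pi$; tracking the index ranges, this happens iff $\gcd(m,n)>1$, which gives the ``otherwise'' case. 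When $\gcd(m,n)=1$ the product telescopes through the sine identity to $\pm1$, and the sign is pinned down by counting how many factors are negative — this count is controlled by $(m-1)(n-1)/2 \pmod 2$. The torus case \eqref{T2} is similar but with $2\pi j/n$ arguments: a factor vanishes iff $m$ or $n$ is even (giving a midpoint eigenvalue $0$, or a cancellation), and when both are odd the product collapses to $4^{\gcd(m,n)}$, the exponent coming from counting index pairs $(i,j)$ for which the two cosine factors pair up into full ``$\prod 2\cos = \pm$ small'' cycles indexed by $\gcd(m,n)$. The cylinder case \eqref{T3} mixes a path-type argument $\pi i/m$ with a cycle-type argument $2\pi j/n$, so the vanishing/parity analysis splits according to whether $n$ is odd (factor $m$) or $n\equiv 2\pmod 4$ versus $n\equiv 0\pmod 4$, producing the three subcases with the stated $m$, $(-1)^{m-1}m^2$, or $0$.

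\textbf{Main obstacle.} The conceptual content is routine; the real work — and the step I expect to be the main obstacle — is the careful bookkeeping of (i) \emph{exactly which} index pairs make a factor vanish, so that the gcd conditions come out precisely right, and (ii) the \emph{sign} in \eqref{T1} and the correct \emph{exponent} $\gcd(m,n)$ in \eqref{T2}. Getting the sign requires counting, modulo $2$, the number of pairs $(i,j)$ with $\cos(\tfrac{\alpha+\beta}{2})\cos(\tfrac{\alpha-\beta}{2})<0$, which is a lattice-point parity count that must be reconciled with $(-1)^{(m-1)(n-1)/2}$; a clean way to do this is to pair up the index $j$ with $n-j$ (and $i$ with $m-i$) to cancel signs in pairs, leaving only a controlled number of ``self-paired'' fixed indices to handle by hand. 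Once the reindexing symmetry is set up correctly, each of the three formulas should fall out by applying the sine product identity the appropriate number of times.
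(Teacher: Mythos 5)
Your overall strategy --- Cartesian-product eigenvalues, the factorization $2\cos\alpha+2\cos\beta=4\cos\big(\tfrac{\alpha+\beta}{2}\big)\cos\big(\tfrac{\alpha-\beta}{2}\big)$, then trigonometric product identities --- is exactly the paper's. But as written the plan has genuine gaps at the two places where the content actually lives. First, the only identities you invoke are products with \emph{fixed} arguments, such as $\prod_{j=1}^{N-1}2\sin(\pi j/N)=N$ and $\prod_{j=0}^{N-1}2\cos(\pi(2j+1)/(2N))$; these do not by themselves telescope the products that actually occur, which after the factorization are \emph{shifted} products like $\prod_{j=0}^{n-1}\sin\big(x+\tfrac{j\pi}{n}\big)$ with a shift $x=\tfrac{i\pi}{m}$ (or $x=-\tfrac{\pi}{2}+\tfrac{i\pi}{m}$, or with a coprime stride, as in the cylinder and M\"obius cases) depending on the outer index. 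What is needed is $\sin(nx)=2^{n-1}(-1)^{\frac{(a-1)(n-1)}{2}}\prod_{j=0}^{n-1}\sin\big(x+\tfrac{aj\pi}{n}\big)$ for $\gcd(a,n)=1$, i.e.\ the paper's Lemma~\ref{lem:triiden}, or an equivalent reindexing argument (for \eqref{T1} one can instead use the CRT bijection $(i,j)\mapsto in+jm \bmod mn$ to get $\prod_{i,j}2\big|\sin\big(\tfrac{(in+jm)\pi}{mn}\big)\big|=\tfrac{mn}{m\cdot n}=1$); you assert the telescoping but supply neither. Your magnitude-plus-sign split is workable for \eqref{T1} --- the pairing $(i,j)\leftrightarrow(m-i,n-j)$ does yield the sign $(-1)^{\frac{(m-1)(n-1)}{2}}$, and this is a genuine alternative to the paper's signed identity --- but for \eqref{T2} the exponent $\gcd(m,n)$ is only asserted (``full cycles indexed by $\gcd$''), and that is precisely the bookkeeping the coprime-stride identity \eqref{eq3} is designed to settle.

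Second, your case analysis for the cylinder \eqref{T3} is wrong as stated: you split the even-$n$ case by $n\equiv2$ versus $n\equiv0\pmod 4$, but the correct criterion is $\gcd(m,n/2)$, and the value $(-1)^{m-1}m^2$ does not depend on $n\bmod 4$. For instance $m=3$, $n=4$ gives determinant $9\neq0$ although $n\equiv0\pmod4$, while $m=3$, $n=6$ gives $0$ although $n\equiv2\pmod4$; likewise for odd $n$ the vanishing is governed by $\gcd(m,n)$, which your sketch omits. So while the skeleton matches the paper, the proposal is missing the one lemma (or a substitute reindexing argument) that makes the double products collapse with the correct sign and exponent, and it misidentifies the case structure of \eqref{T3}.
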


Note that for $m=2$, \eqref{T1} and \eqref{T3} give the following
well-known determinants.
\begin{align*}
 \det A(P_{n-1})=
  \begin{cases}
   (-1)^{\frac{n-1}{2}} & \text{if $n$ is odd};\\
    0 & \text{otherwise}.
  \end{cases}
  \quad\mbox{and}\quad
  \det A(C_{n})=
  \begin{cases}
   2 & \text{if $n$ is odd};\\
   -4& \text{if $n\equiv 2 \pmod{4}$};\\
   0 & \text{otherwise}.
  \end{cases}
\end{align*}
Our proof techniques or its modifications may be useful in other
situations with similar flavor (see, e.g., \cite{BISH}). For
example, let us consider the {\it M\"{o}bius ladder} $M_{2n}$, the
graph on $2n$ vertices whose edge set is the union of the edge set
of $C_{2n}$ and $\{(v_i,v_{n+i}):i=1,\dots, n\}$. We prove that

\begin{theorem} \label{thm:mob}
Let $n>1$ be an integer. Then
\begin{align}\label{T4}
 \det A(M_{2n})&=
  \begin{cases}
   -3 & \text{if $n\equiv \pm 2 \pmod{6}$;}\\
   -9 & \text{if $n\equiv \pm 1 \pmod{6}$;}\\
    0 & \text{otherwise}.
  \end{cases}
\end{align}
\end{theorem}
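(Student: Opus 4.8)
The plan is to use the same eigenvalue-plus-trigonometry strategy as in the proof of Theorem~\ref{thm:detcar}, but now exploiting that $M_{2n}$ is a \emph{circulant} graph rather than a Cartesian product. Label the vertices of $C_{2n}$ cyclically as $v_0,\dots,v_{2n-1}$; then $v_j$ is adjacent exactly to $v_{j+1}$, $v_{j-1}$ and $v_{j+n}$ (indices modulo $2n$), so $A(M_{2n})$ is the circulant matrix whose first row has $1$'s precisely in positions $1$, $n$ and $2n-1$ --- three distinct positions since $n>1$. Hence its eigenvalues are
\[
\lambda_j=\omega^{j}+\omega^{-j}+\omega^{nj}=2\cos\frac{\pi j}{n}+(-1)^{j},\qquad j=0,1,\dots,2n-1,
\]
with $\omega=e^{\pi i/n}$, and $\det A(M_{2n})=\prod_{j=0}^{2n-1}\lambda_j$.

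The next step is to split this product by the parity of $j$. Writing $j=2\ell$ and $j=2\ell+1$ with $\ell=0,\dots,n-1$ in the two cases gives
\[
\det A(M_{2n})=\Bigl(\prod_{\ell=0}^{n-1}\bigl(2\cos\tfrac{2\pi\ell}{n}+1\bigr)\Bigr)\Bigl(\prod_{\ell=0}^{n-1}\bigl(2\cos\tfrac{(2\ell+1)\pi}{n}-1\bigr)\Bigr).
\]
To each factor I would apply the product identity
\[
\prod_{k=0}^{n-1}\Bigl(2\cos\phi-2\cos\tfrac{2k\pi+\psi}{n}\Bigr)=2\bigl(\cos n\phi-\cos\psi\bigr),
\]
which follows immediately from factoring $z^{n}-e^{\pm i\psi}$ over its roots. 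Taking $(\phi,\psi)=(2\pi/3,0)$ in the first product --- so that $2\cos\phi=-1$ --- and $(\phi,\psi)=(\pi/3,\pi)$ in the second --- so that $2\cos\phi=1$ --- one pulls a factor $(-1)^{n}$ out of each product; these cancel, leaving the clean closed form
\[
\det A(M_{2n})=4\Bigl(\cos\tfrac{2\pi n}{3}-1\Bigr)\Bigl(\cos\tfrac{\pi n}{3}+1\Bigr).
\]

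It then remains to read off the stated values by a short case analysis on $n\bmod 6$: the first factor vanishes exactly when $3\mid n$, which yields the ``$0$ otherwise'' case; for $n\equiv\pm1\pmod 6$ the expression is $4\cdot(-\tfrac32)\cdot\tfrac32=-9$, and for $n\equiv\pm2\pmod 6$ it is $4\cdot(-\tfrac32)\cdot\tfrac12=-3$. I expect the genuinely substantive ingredient to be the product identity above (the same workhorse behind \eqref{T1}--\eqref{T3}); the remaining points are careful bookkeeping --- checking that $A(M_{2n})$ really is circulant with exactly those three generators, seeing that the small degenerate instances $M_4=K_4$ and $M_6=K_{3,3}$ are handled correctly by the formula, keeping the index ranges straight in the parity split, and choosing $\phi,\psi$ so that the two factors of $(-1)^{n}$ cancel rather than reinforce.
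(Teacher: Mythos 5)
Your argument is correct, and it reaches the result by a noticeably different route than the paper. The paper also starts from the eigenvalues $(-1)^j+2\cos(j\pi/n)$ (quoted from Biggs), but then absorbs the $(-1)^j$ by writing it as $2\cos\bigl(\tfrac{(3j+1)\pi}{3}\bigr)$, converts the product of cosines into products of sines, and invokes identity \eqref{eq1} of Lemma~\ref{lem:triiden}, which forces a case split on the parity of $n$ together with the coprimality checks $\gcd(n+1,2n)=1$, resp.\ $\gcd(n'+1,n)=1$. You instead observe that $A(M_{2n})$ is circulant with connection set $\{\pm1,n\}$ (so the eigenvalues come for free), split the product over $j$ by parity, and evaluate each half with the Chebyshev-type identity $\prod_{k=0}^{n-1}\bigl(2\cos\phi-2\cos\tfrac{2k\pi+\psi}{n}\bigr)=2(\cos n\phi-\cos\psi)$, obtained from factoring $z^n-e^{\pm i\psi}$; I checked the sign bookkeeping and the two factors of $(-1)^n$ do cancel, giving the uniform closed form $\det A(M_{2n})=4\bigl(\cos\tfrac{2\pi n}{3}-1\bigr)\bigl(\cos\tfrac{\pi n}{3}+1\bigr)$, from which the three cases modulo $6$ follow exactly as you state. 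What your approach buys is a single formula valid for all $n$, with no parity-of-$n$ cases and no gcd conditions, plus a self-contained derivation of the spectrum; what the paper's approach buys is uniformity of machinery, since it recycles the same Lemma~\ref{lem:triiden} that drives the grid, torus and cylinder computations.
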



\section{Techniques and Proofs}

The starting point of our calculations is the following well-known
theorem which gives the eigenvalues of the Cartesian product of
two graphs (see, e.g., \cite[p. 587]{ROS}).

\begin{theorem} \label{thm:eigcar}
Let $G_1$ be a graph of order $m$, and $G_2$ be a graph of order
$n$. If the eigenvalues of $A(G_1)$ and $A(G_2)$ are,
respectively, $\lambda_{1}(G_1),\ldots,\lambda_{m}(G_1)$ and
$\lambda_{1}(G_2),\ldots,\lambda_{n}(G_2)$, then the eigenvalues
of $A(G_1\; \Box \;G_2)$ are precisely the numbers $
\lambda_{i}(G_1)+\lambda_{j}(G_2),$ for $i=1,2,\ldots,m$ and
$j=1,2,\ldots,n$.
\end{theorem}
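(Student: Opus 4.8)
The plan is to realize the adjacency matrix of the Cartesian product as a \emph{Kronecker sum} of the adjacency matrices of the two factors, and then to exploit the mixed-product property of the tensor product together with the spectral theorem.

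First I would index the vertices of $G_1\;\Box\;G_2$ by the pairs $(u,v)\in V_1\times V_2$ and verify directly from the definition of the edge set that
\[
A(G_1\;\Box\;G_2) = A(G_1)\otimes I_n + I_m\otimes A(G_2),
\]
where $\otimes$ denotes the Kronecker product and $I_k$ the $k\times k$ identity matrix. The two summands recover exactly the two families of edges in the definition of the product: the entry of $A(G_1)\otimes I_n$ at position $\big((u,v),(u',v')\big)$ equals $A(G_1)_{u,u'}$ when $v=v'$ and is $0$ otherwise, which matches the edges in which the second coordinate is held fixed; symmetrically, $I_m\otimes A(G_2)$ accounts for the edges in which the first coordinate is fixed.

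Next, since each $A(G_i)$ is a real symmetric matrix, the spectral theorem furnishes orthonormal eigenbases. Let $x_1,\dots,x_m$ be an orthonormal basis of eigenvectors of $A(G_1)$ with $A(G_1)x_i=\lambda_i(G_1)x_i$, and let $y_1,\dots,y_n$ be the analogous basis for $A(G_2)$ with eigenvalues $\lambda_j(G_2)$. Using the mixed-product identity $(P\otimes Q)(x\otimes y)=(Px)\otimes(Qy)$, a one-line computation gives
\[
\big(A(G_1)\otimes I_n + I_m\otimes A(G_2)\big)(x_i\otimes y_j)
=\big(\lambda_i(G_1)+\lambda_j(G_2)\big)(x_i\otimes y_j),
\]
so each $x_i\otimes y_j$ is an eigenvector with eigenvalue $\lambda_i(G_1)+\lambda_j(G_2)$.

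The remaining point, which I regard as the only substantive step, is to confirm that these $mn$ eigenvectors account for the entire spectrum \emph{with the correct multiplicities}. This follows because the tensor products $\{x_i\otimes y_j\}_{i,j}$ of two orthonormal bases themselves form an orthonormal basis of $\mathbb{R}^{mn}$; hence we have exhibited a complete eigenbasis, and the multiset of eigenvalues of $A(G_1\;\Box\;G_2)$ is precisely $\{\lambda_i(G_1)+\lambda_j(G_2): 1\le i\le m,\ 1\le j\le n\}$. The main obstacle is thus one of bookkeeping rather than of ideas: the easy direction merely produces eigenvalues of the Kronecker sum, and care is needed to argue that no eigenvalue is omitted — which is exactly what the completeness of the tensor basis guarantees.
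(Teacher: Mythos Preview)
Your argument is correct and is the standard proof via the Kronecker-sum identity $A(G_1\;\Box\;G_2)=A(G_1)\otimes I_n+I_m\otimes A(G_2)$ together with the tensor basis of eigenvectors. Note, however, that the paper does not supply its own proof of this statement: it is quoted as a well-known fact with a reference to \cite[p.~587]{ROS}, so there is nothing to compare against beyond observing that your proof is the expected textbook one.
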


We also need the following trigonometric identities, which might
be of independent interest.

\begin{lemma} \label{lem:triiden}
Let $n$ be a positive integer and let $a\in\mathbb{Z}$ such that $\gcd(a,n)=1$. Then for any real number $x$,
\begin{equation}\label{eq1}
\sin(nx)=2^{n-1}(-1)^{\frac{(a-1)(n-1)}{2}}\prod_{j=0}^{n-1}\sin\left(x+{a j\pi \over n}\right).
\end{equation}
Moreover,
\begin{equation}\label{eq2}
\prod_{j=1}^{n-1}\sin\left({a j\pi \over n}\right)=(-1)^{\frac{(a-1)(n-1)}{2}}\cdot{n\over 2^{n-1}}
\end{equation}
and
\begin{equation}\label{eq3}
\prod_{j=1}^{n-1}\cos\left({a j\pi \over n}\right)=
  \begin{cases}
   (-1)^{\frac{a(n-1)}{2}}\cdot{1\over 2^{n-1}} & \mbox{if $n$ is odd};\\
    0 & \mbox{otherwise}.
  \end{cases}
\end{equation}
\end{lemma}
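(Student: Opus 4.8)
The plan is to derive all three identities from the classical product formula
\[
\sin(nx)=2^{n-1}\prod_{j=0}^{n-1}\sin\!\left(x+\frac{j\pi}{n}\right),
\]
which is exactly the case $a=1$ of \eqref{eq1}. To establish this base case I would factor $z^{n}-1$ over $\mathbb{C}$: starting from $e^{2inx}-1=\prod_{k=0}^{n-1}\big(e^{2ix}-e^{2\pi ik/n}\big)$ and applying $e^{i\beta}-e^{i\gamma}=2i\,e^{i(\beta+\gamma)/2}\sin\!\big((\beta-\gamma)/2\big)$ to every factor (including the left-hand side with $\beta=2nx$, $\gamma=0$), one gets $2i\sin(nx)$ on the left and $(2i)^{n}e^{i\pi(n-1)/2}\prod_{k=0}^{n-1}\sin\!\big(x-\tfrac{k\pi}{n}\big)$ on the right after the common factor $e^{inx}$ cancels. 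Reindexing $k\mapsto n-k$ turns $\sin(x-\tfrac{k\pi}{n})$ into $-\sin(x+\tfrac{k\pi}{n})$, and the accumulated powers of $i$ and $-1$ cancel exactly, giving the stated formula. (One may alternatively just cite this as standard.)

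For general $a$ with $\gcd(a,n)=1$, the key observation is that $j\mapsto aj\bmod n$ permutes $\{0,1,\dots,n-1\}$. Writing $aj=r_{j}+nq_{j}$ with $r_{j}\in\{0,\dots,n-1\}$, we have $\sin\!\big(x+\tfrac{aj\pi}{n}\big)=(-1)^{q_{j}}\sin\!\big(x+\tfrac{r_{j}\pi}{n}\big)$, so the product over $j$ equals $(-1)^{\sum_{j}q_{j}}\prod_{k=0}^{n-1}\sin\!\big(x+\tfrac{k\pi}{n}\big)=(-1)^{\sum_{j}q_{j}}\,2^{1-n}\sin(nx)$. To pin down the sign, sum $aj$ over $j$ two ways: $a\cdot\tfrac{n(n-1)}{2}=\tfrac{n(n-1)}{2}+n\sum_{j}q_{j}$, whence $\sum_{j}q_{j}=\tfrac{(a-1)(n-1)}{2}$ (an integer, since one of $n-1$, $a-1$ is even), which is precisely the exponent in \eqref{eq1}. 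This proves \eqref{eq1}.

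Identities \eqref{eq2} and \eqref{eq3} are then specializations. For \eqref{eq2}, divide \eqref{eq1} by $\sin x$ and let $x\to0$: the left side tends to $n$, the $j=0$ factor on the right is $\sin x$, and the remaining factors tend to $\sin\!\big(\tfrac{aj\pi}{n}\big)$. For \eqref{eq3}, set $x=\pi/2$ in \eqref{eq1}: the $j=0$ factor becomes $\cos 0=1$, the factors with $j\ge1$ become $\cos\!\big(\tfrac{aj\pi}{n}\big)$, and $\sin(n\pi/2)$ equals $0$ when $n$ is even (consistent with the claimed vanishing, which is also visible directly from the $j=n/2$ factor since $a$ is odd) and $(-1)^{(n-1)/2}$ when $n$ is odd. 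In the odd case one solves for the product and checks $(-1)^{(n-1)/2-(a-1)(n-1)/2}=(-1)^{a(n-1)/2}$, which holds because the two exponents differ by $(n-1)(1-a)$, an even number.

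I expect the only real friction to be the sign bookkeeping: getting the prefactor $(-1)^{(a-1)(n-1)/2}$ right in the base-case reindexing, verifying that $\sum_{j}q_{j}$ equals that exponent exactly (not merely modulo $2$), and the parity simplification needed in \eqref{eq3}. None of this is deep, but each step requires attention to whether $n$ is even or odd.
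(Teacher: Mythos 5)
Your proposal is correct; all three identities come out with the right signs, and I checked the delicate points (the exact count $\sum_j q_j=\tfrac{(a-1)(n-1)}{2}$ obtained by summing $aj=r_j+nq_j$ over $j$, and the parity reconciliation $(-1)^{(n-1)/2-(a-1)(n-1)/2}=(-1)^{a(n-1)/2}$ for odd $n$). The route differs from the paper's mainly in how the general $a$ is handled. The paper proves \eqref{eq1} in one stroke: it sets $\omega=e^{\pi a I/n}$, uses that $\omega^{-2}$ is a primitive $n$-th root of unity (this is where $\gcd(a,n)=1$ enters) so that $\prod_{j}(z-\omega^{-2j})=z^n-1$, and reads off the sign from $\omega^{n(n-1)/2}/I^{n-1}=I^{(a-1)(n-1)}$; you instead establish only the classical $a=1$ identity by that factorization (or by citation) and then transfer to general $a$ through the permutation $j\mapsto aj\bmod n$, with the sign appearing as $(-1)^{\sum_j q_j}$, i.e.\ as a count of how many angles wrap past multiples of $\pi$. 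The paper's version is shorter and uniform in $a$; yours makes the origin of the factor $(-1)^{(a-1)(n-1)/2}$ combinatorially transparent, lets you quote the $a=1$ formula as standard, and works verbatim for negative $a$ since $q_j$ may be any integer. For \eqref{eq2} both arguments are identical (divide by $\sin x$, let $x\to 0$); for \eqref{eq3} the paper converts cosines to sines shifted by $-\pi/2$ and gets the exponent $a(n-1)/2$ directly, whereas your choice $x=\pi/2$ needs the small even-difference check you correctly carry out, and your observation that the $j=n/2$ factor vanishes for even $n$ (as $a$ is odd) is a nice direct confirmation of the zero case.
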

\begin{proof} Let $\omega=e^{\pi aI/n}$, where $I=\sqrt{-1}$. Then, since
$\{\omega^{-2j}:j=0,\dots,n-1\}$ are all the $n$-th roots of unity, it follows that
$$\prod_{j=0}^{n-1}(z-\omega^{-2j})=z^n-1.$$
Hence,
\begin{align*}
\prod_{j=0}^{n-1}\sin\left(x+{a j\pi \over n}\right)&=
\prod_{j=0}^{n-1}\frac{e^{Ix}\omega^{j}-e^{-Ix}\omega^{-j}}{2I}\\
&=\frac{e^{-nIx}\omega^{n(n-1)/2}}{(2I)^n}\prod_{j=0}^{n-1}(e^{2Ix}-\omega^{-2j})\\
&=\frac{I^{(a-1)(n-1)}}{2^{n-1}}\cdot\frac{e^{nIx}-e^{-nIx}}{2I}\\
&=\frac{(-1)^{\frac{(a-1)(n-1)}{2}}}{2^{n-1}}\cdot\sin(nx).
\end{align*}
Moreover,
$$\prod_{j=1}^{n-1}\sin\left({a j\pi \over n}\right)
={(-1)^{\frac{(a-1)(n-1)}{2}}\over 2^{n-1}}\lim_{x\to
0}\frac{\sin(nx)}{\sin x} =(-1)^{\frac{(a-1)(n-1)}{2}}\cdot{n\over
2^{n-1}}.$$ Finally,
$$\prod_{j=1}^{n-1}\cos\left({a j\pi \over n}\right)
=(-1)^{n-1}\prod_{j=1}^{n-1}\sin\left(-{\pi\over 2}+{a j\pi \over
n}\right)
=(-1)^{n-1}\sin(n\pi/2)\cdot{(-1)^{\frac{(a-1)(n-1)}{2}}\over
2^{n-1}},$$ which easily yields the required formula.
\end{proof}

Now, we are ready to prove Theorem \ref{thm:detcar}.

\begin{proof}[Proof of Theorem \ref{thm:detcar}]
It is known (see, e.g., \cite[p. 588]{ROS}) that the eigenvalues
of $A(P_{m-1})$ and $A(C_{n})$ are, respectively,
$$\left\{2\cos\left(\frac{i\pi}{m}\right)\;:\; 1\leq i\leq m-1\right\}\quad\mbox{and}\quad
\left\{2\cos\left(\frac{2j\pi}{n}\right)\;:\; 1\leq j\leq n\right\}.$$

The proof is done by a direct combination of
Theorem~\ref{thm:eigcar} and Lemma~\ref{lem:triiden}.

We start with \eqref{T1}. Using the identity $\cos(a+b)+\cos(a-b)=2\cos(a)\cos(b)$,
\begin{align*}
\det A(P_{m-1}\; \Box \; P_{n-1})&=\prod_{i=1}^{m-1}\prod_{j=1}^{n-1}\Big(2\cos\left(\frac{i\pi}{m}\right)+2\cos\left(\frac{j\pi}{n}\right)\Big)\\
&=2^{(m-1)(n-1)}\prod_{i=1}^{m-1}\prod_{j=1}^{n-1}2\cos\Big(\frac{i\pi}{2m}+\frac{j\pi}{2n}\Big)\cos\Big(\frac{i\pi}{2m}-\frac{j\pi}{2n}\Big)\\
&=2^{(m-1)(n-1)}\prod_{i=1}^{m-1}\prod_{j=1}^{n-1}2\cos\Big(\frac{i\pi}{2m}+\frac{j\pi}{2n}\Big)\cos\Big(\frac{i\pi}{2m}-\frac{(n-j)\pi}{2n}\Big)\\
&=2^{(m-1)(n-1)}\prod_{i=1}^{m-1}\prod_{j=1}^{n-1}2\cos\Big(\frac{i\pi}{2m}+\frac{j\pi}{2n}\Big)\sin\Big(\frac{i\pi}{2m}+\frac{j\pi}{2n}\Big)\\
&=2^{(m-1)(n-1)}\prod_{i=1}^{m-1}\prod_{j=1}^{n-1}\sin\Big(\frac{i\pi}{m}+\frac{j\pi}{n}\Big)\\
&=2^{(m-1)(n-1)}\prod_{i=1}^{m-1}
\frac{\sin\left(\frac{ni\pi}{m}\right)}{2^{n-1}\sin\left(\frac{i\pi}{m}\right)}
=\frac{\prod_{i=1}^{m-1}\sin\left(\frac{ni\pi}{m}\right)}{\prod_{i=1}^{m-1}\sin\left(\frac{i\pi}{m}\right)},
\end{align*}
where in the last but one step we have used the identity \eqref{eq1}.
Clearly, if $\gcd(m,n)\not=1$ then
$\prod_{i=1}^{m-1}\sin\left(\frac{ni\pi}{m}\right)=0$, otherwise we use \eqref{eq2}.

Now, we show \eqref{T2}. In the case that $m$ or $n$ is even the
proof is straightforward because one of the eigenvalues of
$A(C_{m}\; \Box \; C_{n})$ is zero. Assume that $m$ and $n$ are
odd and let $d=\gcd(m,n)$, with $m'=m/d$, $n'=n/d$.
\begin{align*}
\det A(C_{m}\; \Box \; C_{n})&=\prod_{i=1}^{m}\prod_{j=1}^{n}\left(2\cos\left(\frac{2i\pi}{m}\right)+2\cos\left(\frac{2j\pi}{n}\right)\right)\\
&=4^{mn}\prod_{i=0}^{m-1}\prod_{j=0}^{n-1}\cos\Big(\frac{i\pi}{m}+\frac{j\pi}{n}\Big)\cos\Big(\frac{i\pi}{m}-\frac{j\pi}{n}\Big)\\
&=4^{mn}\Bigg(\prod_{i=0}^{m-1}\prod_{j=0}^{n-1}
\cos\Big(\frac{i\pi}{m}+\frac{j\pi}{n}\Big)\cos\Big(\frac{i\pi}{m}-\frac{(n-j)\pi}{n}\Big)\Bigg)\\
&=4^{mn}\Bigg(\prod_{i=0}^{m-1}\prod_{j=0}^{n-1}
\cos\Big(\frac{i\pi}{m}+\frac{j\pi}{n}\Big)\Bigg)^2\\
&=4^{mn}\Bigg(\prod_{i=0}^{m-1}\prod_{j=0}^{n-1}
\sin\Big(-\frac{\pi}{2}+\frac{i\pi}{m}+\frac{j\pi}{n}\Big)\Bigg)^2\\
&=4^{m}\Bigg(\prod_{i=0}^{m-1}
\sin\left(n\left(-\frac{\pi}{2}+\frac{i\pi}{m}\right)\right)\Bigg)^2\\
&=4^{m}\Bigg(\prod_{i=0}^{m-1} \cos\left(\frac{ni\pi}{m}\right)\Bigg)^2
=4^{m}\Bigg(\prod_{i=1}^{m' d} \cos\left(\frac{n'i\pi}{m'}\right)\Bigg)^2=4^{m}\Big(\frac{1}{4^{m'-1}}\Big)^d=4^d,
\end{align*}
where in the last but one step we have used \eqref{eq3}.

Finally, we prove \eqref{T3}.
\begin{align*}
\det A(P_{m-1}\; \Box \; C_{n})&=\prod_{i=1}^{m-1}\prod_{j=1}^{n}\left(2\cos\left(\frac{i\pi}{m}\right)+2\cos\left(\frac{2j\pi}{n}\right)\right)\\
&=4^{(m-1)n}\prod_{i=1}^{m-1}\prod_{j=0}^{n-1}
\cos\Big(\frac{i\pi}{2m}+\frac{j\pi}{n}\Big)\cos\Big(\frac{i\pi}{2m}-\frac{j\pi}{n}\Big)\\
&=4^{(m-1)n}\prod_{i=1}^{m-1}\prod_{j=0}^{n-1}
\cos\Big(\frac{(m-i)\pi}{2m}+\frac{j\pi}{n}\Big)\cos\Big(\frac{(m-i)\pi}{2m}-\frac{j\pi}{n}\Big)\\
&=(-4)^{(m-1)n}\prod_{i=1}^{m-1}\prod_{j=0}^{n-1}
\sin\Big(-\frac{i\pi}{2m}+\frac{j\pi}{n}\Big)\sin\Big(\frac{i\pi}{2m}+\frac{j\pi}{n}\Big)\\
&=(-4)^{(m-1)n}\prod_{i=1}^{m-1}\frac{1}{4^{n-1}}
\sin\Big(-\frac{ni\pi}{2m}\Big)\sin\Big(\frac{ni\pi}{2m}\Big)\\
&=(-1)^{(m-1)(n-1)}4^{m-1}\Bigg(\prod_{i=1}^{m-1}
\sin\Big(\frac{ni\pi}{2m}\Big)\Bigg)^2.
\end{align*}
If $n$ is even and $\gcd(m,n')=1$ where $n'=n/2$ then, by \eqref{eq2},
$$(-1)^{(m-1)(n-1)}4^{m-1}\Bigg(\prod_{i=1}^{m-1}
\sin\Big(\frac{ni\pi}{2m}\Big)\Bigg)^2=
(-1)^{(m-1)}4^{m-1}\Bigg(\prod_{i=1}^{m-1}\sin\Big(\frac{n'i\pi}{m}\Big)\Bigg)^2=
(-1)^{(m-1)}m^2.
$$
If $n$ is odd and $\gcd(m,n)=1$ then, $\gcd(2m,n)=1$ and by \eqref{eq2},
\begin{align*}
(-1)^{(m-1)(n-1)}4^{m-1}\Bigg(\prod_{i=1}^{m-1}
\sin\Big(\frac{ni\pi}{2m}\Big)\Bigg)^2
&=4^{m-1}\Bigg(\prod_{i=1}^{m-1}
\sin\Big(\frac{ni\pi}{2m}\Big)\Bigg)\Bigg(\prod_{i=m+1}^{2m-1}
\sin\Big(\frac{n(2m-i)\pi}{2m}\Big)\Bigg)\\
&=4^{m-1}\sin\Big(\frac{n\pi}{2}\Big)\prod_{i=1}^{2m-1}\sin\Big(\frac{ni\pi}{2m}\Big)=m
\end{align*}
It is easy to verify that the remaining cases yield zero.
\end{proof}

Now, we prove Theorem \ref{thm:mob}.

\begin{proof}[Proof of Theorem \ref{thm:mob}]

The eigenvalues of $A(M_{2n})$ are (see, e.g., \cite[p. 21]{BIG})
$$\left\{(-1)^j + 2\cos\left(\frac{j\pi}{n}\right)\;:\; 1\leq j\leq 2n\right\}.$$
Hence,
\begin{align*}
\det A(M_{2n})&=\prod_{j=1}^{2n}\left((-1)^j + 2\cos\left(\frac{j\pi}{n}\right)\right)\\
&=\prod_{j=0}^{2n-1}\left(2\cos\left(\frac{(3j+1)\pi}{3}\right) + 2\cos\left(\frac{j\pi}{n}\right)\right)\\
&=4^{2n}\prod_{j=0}^{2n-1}\left(\cos\left(\frac{(3j+1)\pi}{6}+\frac{j\pi}{2n}\right)\cos\left(\frac{(3j+1)\pi}{6}-\frac{j\pi}{2n}\right) \right)\\
&=4^{2n}\prod_{j=0}^{2n-1}\sin\left(\frac{\pi}{3}-\frac{(n+1)j\pi}{2n}\right)\prod_{j=0}^{2n-1}\sin\left(\frac{\pi}{3}-\frac{(n-1)j\pi}{2n}\right).
\end{align*}
If $n$ is even then $\gcd(n+1,2n)=1$, and then by \eqref{eq1},
$$\det A(M_{2n})=-4\sin^2\left(\frac{2n\pi}{3}\right)=\begin{cases}
   -3 & \text{if $n\equiv \pm 2 \pmod{6}$;}\\
    0 & \text{if $n\equiv 0 \pmod{6}$.}
  \end{cases}$$
If $n$ is odd then $\gcd(n'+1,n)=1$, where $n'=(n-1)/2$, and then
by \eqref{eq1},
\begin{align*}
\det A(M_{2n})&=
-4^{2n}\left(\prod_{j=0}^{n-1}\sin\left(\frac{\pi}{3}-\frac{(n'+1)j\pi}{n}\right)\right)^4\\
&=-16\sin^4\left(\frac{n\pi}{3}\right)=\begin{cases}
   -9 & \text{if $n\equiv \pm 1 \pmod{6}$;}\\
    0 & \text{if $n\equiv 3 \pmod{6}$.}
  \end{cases}
\end{align*}
\end{proof}


\end{document}